\definecolor{webgreen}{rgb}{0,.5,0}
\def\C{{\mathds{C}}}
\def\R{{\mathbb{R}}}
\def\N{{\mathds{N}}}
\def\P{{\mathds{P}}}
\def\1{{\bf 1}}
\def\id{\operatorname{id}}
\def\pont{$\bullet$ }
\numberwithin{equation}{section}
\newtheorem{theorem}{Theorem}%[section]
\newtheorem{cor}{Corollary}
\newtheorem{prop}{Proposition}
\newtheorem{remark}{Remark}
\begin{document}

\title{{\bf Expansions of arithmetic functions of several variables with respect to certain modified unitary Ramanujan sums}}
\author{L\'aszl\'o T\'oth \\ \\ Department of Mathematics, University of P\'ecs \\
Ifj\'us\'ag \'utja 6, 7624 P\'ecs, Hungary \\ E-mail: {\tt ltoth@gamma.ttk.pte.hu}}
\date{}
\maketitle

\centerline{Bull. Math. Soc. Sci. Math. Roumanie 61 (109) No. 2, 2018, 213--223} 

\begin{abstract} We introduce new analogues of the Ramanujan sums, denoted by $\widetilde{c}_q(n)$, associated with unitary divisors, and
obtain results concerning the expansions of arithmetic functions of several variables with respect to the sums $\widetilde{c}_q(n)$. We apply
these results to certain functions associated with $\sigma^*(n)$ and $\phi^*(n)$, representing the unitary sigma function and unitary phi
function, respectively.
\end{abstract}

{\sl 2010 Mathematics Subject Classification}: 11A25, 11N37

{\sl Key Words and Phrases}: Ramanujan expansion of arithmetic functions, arithmetic function of several variables, multiplicative function,
unitary divisor, sum of unitary divisors, unitary Euler function, unitary Ramanujan sum

\section{Introduction}

Let $c_q(n)$ denote the Ramanujan sums, defined by
\begin{equation*}
c_q(n)=\sum_{\substack{1\le k\le q \\ (k,q)=1}} \exp(2\pi ikn/q),
\end{equation*}
where $q,n\in \N=\{1,2,\ldots\}$. Let $\sigma(n)$ be, as usual, the sum of divisors of $n$. Ramanujan's \cite{Ramanujan1918} classical identity
\begin{equation} \label{sigma_1}
\frac{\sigma(n)}{n}= \zeta(2) \sum_{q=1}^{\infty} \frac{c_q(n)}{q^2} \quad (n\in \N),
\end{equation}
where $\zeta$ is the Riemann zeta function, can be generalized as
\begin{equation} \label{sigma_k}
\frac{\sigma((n_1,\ldots,n_k))}{(n_1,\ldots,n_k)} = \zeta(k+1) \sum_{q_1,\ldots,q_k=1}^{\infty}
\frac{c_{q_1}(n_1)\cdots c_{q_k}(n_k)}{[q_1,\ldots,q_k]^{k+1}} \quad (n_1,\ldots,n_k\in \N),
\end{equation}
valid for any $k\in \N$. See the author \cite[Eq.\ (28)]{Tot2017}. Here $(n_1,\ldots,n_k)$ and $[n_1,\ldots,n_k]$ stand for the greatest common
divisor and the least common multiple, respectively, of $n_1,\ldots,n_k$. For $k=2$ identity \eqref{sigma_k} was deduced by Ushiroya \cite[Ex.\ 3.8]{Ush2016}.

By making use of the unitary Ramanujan sums $c^*_q(n)$, we also have
\begin{equation} \label{sigma_k_unit}
\frac{\sigma((n_1,\ldots,n_k))}{(n_1,\ldots,n_k)} = \zeta(k+1) \sum_{q_1,\ldots,q_k=1}^{\infty}
\frac{\phi_{k+1}([q_1,\ldots,q_k])}{[q_1,\ldots,q_k]^{2(k+1)}} c^*_{q_1}(n_1)\cdots c^*_{q_k}(n_k) \quad (n_1,\ldots,n_k\in \N),
\end{equation}
for any $k\in \N$. See \cite[Eq.\ (30)]{Tot2017}. The notations used here (and throughout the paper), which are not explained in the text,
are included in Section \ref{Sect_Not}. In fact, \eqref{sigma_k} and \eqref{sigma_k_unit} are special cases of the
following general result, which can be applied to several other special functions, as well.

\begin{theorem}[{\cite[Th.\ 4.3]{Tot2017}}] \label{Th_gen} Let $g:\N \to \C$ be an arithmetic function and let $k\in \N$. Assume that
\begin{equation*} \label{cond_g_multipl}
\sum_{n=1}^{\infty} 2^{k\, \omega(n)} \frac{|(\mu*g)(n)|}{n^k} < \infty.
\end{equation*}

Then for every $n_1,\ldots,n_k\in \N$,
\begin{equation*} \label{f_Raman_g_gcd}
g((n_1,\ldots,n_k)) = \sum_{q_1,\ldots,q_k=1}^{\infty} a_{q_1,\ldots,q_k} c_{q_1}(n_1) \cdots c_{q_k}(n_k),
\end{equation*}
\begin{equation*} \label{f_Raman_unit_g_gcd}
g((n_1,\ldots,n_k)) = \sum_{q_1,\ldots,q_k=1}^{\infty} a^*_{q_1,\ldots,q_k} c^*_{q_1}(n_1) \cdots c^*_{q_k}(n_k)
\end{equation*}
are absolutely convergent, where
\begin{equation*}
a_{q_1,\ldots,q_k} = \frac1{Q^k} \sum_{m=1}^{\infty} \frac{(\mu*g)(mQ)}{m^k},
\end{equation*}
\begin{equation*}
a^*_{q_1,\ldots,q_k} = \frac1{Q^k} \sum_{\substack{m=1\\ (m,Q)=1}}^{\infty} \frac{(\mu*g)(mQ)}{m^k},
\end{equation*}
with the notation $Q=[q_1,\ldots,q_k]$.
\end{theorem}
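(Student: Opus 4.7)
Set $h=\mu * g$, so that $g=\1 * h$ in the sense of Dirichlet convolution and
\begin{equation*}
g((n_1,\ldots,n_k))=\sum_{d\mid(n_1,\ldots,n_k)}h(d)=\sum_{d=1}^\infty h(d)\prod_{i=1}^k \1_{d\mid n_i}.
\end{equation*}
My plan is to expand each indicator through the pair of orthogonality identities
\begin{equation*}
\sum_{q\mid d}c_q(n)=d\cdot\1_{d\mid n},\qquad \sum_{q\,\|\,d}c^*_q(n)=d\cdot\1_{d\mid n},
\end{equation*}
where $q\,\|\,d$ denotes unitary divisibility. Both follow from M\"obius inversion applied to $c_q(n)=\sum_{e\mid(q,n)}e\mu(q/e)$ and to the analogous representation of $c^*_q(n)$ via the unitary M\"obius function.

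Substituting and multiplying over $i$ gives
\begin{equation*}
g((n_1,\ldots,n_k))=\sum_{d=1}^\infty\frac{h(d)}{d^k}\sum_{\substack{q_1,\ldots,q_k\ge 1\\ q_i\mid d\ \forall i}}\prod_{i=1}^k c_{q_i}(n_i),
\end{equation*}
and the analogous expression in the unitary version (with $q_i\,\|\,d$ in place of $q_i\mid d$). Provided the double series converges absolutely, I will swap the order of summation so that the coefficient of $c_{q_1}(n_1)\cdots c_{q_k}(n_k)$ is $\sum_{d:\,q_i\mid d\,\forall i}h(d)/d^k$. The conditions $q_i\mid d$ for all $i$ are equivalent to $Q\mid d$ with $Q=[q_1,\ldots,q_k]$, so the parametrization $d=mQ$ converts the inner sum into $\frac{1}{Q^k}\sum_{m\ge 1}h(mQ)/m^k$, as asserted. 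In the unitary setting, the condition $q_i\,\|\,d$ for every $i$ forces the $p$-adic valuations $v_p(q_i)$ to be consistent across $i$ and restricts $d$ to the form $d=mQ$ with $(m,Q)=1$ and $Q=[q_1,\ldots,q_k]$, yielding the stated formula for $a^*_{q_1,\ldots,q_k}$.

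The main obstacle is justifying the interchange of summations, which requires absolute convergence. Using the standard bound $|c_q(n)|\le(q,n)$ together with the vanishing of $c_q(n)$ unless $q/(q,n)$ is squarefree, I will establish
\begin{equation*}
\sum_{q\mid d}|c_q(n)|\le\sum_{e\mid(n,d)}e\cdot 2^{\omega(d/e)}\le 2^{\omega(d)}\sigma(n).
\end{equation*}
Taking the product over $i=1,\ldots,k$ and summing against $|h(d)|/d^k$ gives the majorant $\prod_{i=1}^k\sigma(n_i)\cdot\sum_{d\ge 1}2^{k\omega(d)}|h(d)|/d^k$, which is finite by hypothesis. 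The analogous estimate $\sum_{q\,\|\,d}|c^*_q(n)|\le n\cdot 2^{\omega(d)}$, obtained from the multiplicativity of $c^*_q$ in $q$ together with $|c^*_{p^a}(n)|\in\{p^a-1,\,1\}$, controls the unitary series in the same way. Once the interchange is legitimate, the algebraic reduction above yields both expansions.
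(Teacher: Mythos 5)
Your strategy --- write $g=\1*h$ with $h=\mu*g$, expand the indicator $\1_{d\mid n_i}$ through the orthogonality relations $\sum_{q\mid d}c_q(n)=d$ for $d\mid n$ and $\sum_{q\parallel d}c^*_q(n)=d$ for $d\mid n$, interchange the order of summation, and justify the interchange by the bounds $\sum_{q\mid d}|c_q(n)|\le 2^{\omega(d)}\sigma(n)$ and $\sum_{q\parallel d}|c^*_q(n)|\le 2^{\omega(d)}n$ --- is exactly the template the paper itself uses to prove Theorems \ref{Th_f_gen_modified} and \ref{Th_f_g} (the statement under review is only quoted from \cite{Tot2017}, so those proofs are the relevant comparison). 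The classical half of your argument is complete and correct: $q_1\mid d,\ldots,q_k\mid d$ really is equivalent to $Q\mid d$, the substitution $d=mQ$ gives the stated $a_{q_1,\ldots,q_k}$, and your majorant $\prod_i\sigma(n_i)\sum_{d\ge1}2^{k\omega(d)}|h(d)|/d^k$ is finite by hypothesis (the paper uses the sharper bound with $n_i$ in place of $\sigma(n_i)$, which changes nothing).

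The unitary half has a genuine gap at the sentence claiming that ``$q_i\parallel d$ for every $i$'' restricts $d$ to $d=mQ$ with $(m,Q)=1$ and hence yields the stated $a^*_{q_1,\ldots,q_k}$. The set $\{d:q_i\parallel d\ \forall i\}$ equals $\{mQ:(m,Q)=1\}$ only when, at every prime $p$, all nonzero exponents $\nu_p(q_i)$ coincide. When they do not --- already for $k=2$, $q_1=p$, $q_2=p^2$ --- no $d$ satisfies both conditions, so the coefficient your computation actually produces is $0$, whereas the stated formula $\frac1{Q^k}\sum_{(m,Q)=1}h(mQ)/m^k$ with $Q=p^2$ is in general nonzero. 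So what you have proved is an absolutely convergent $c^*$-expansion whose coefficients agree with the asserted ones only on such ``consistent'' tuples and vanish on the rest; you have not proved the formula as stated, and the two coefficient systems are not interchangeable: taking $g=\sigma/\id$, $k=2$, $n_1=n_2=1$ (where $c^*_q(1)=\mu^*(q)$), the inconsistent tuples contribute a nonzero Euler factor $2x^2/(1-x)$ per prime, $x=p^{-3}$, to the full stated sum. You need to either restrict the coefficient formula to consistent tuples (with $a^*=0$ otherwise) or supply a separate argument for the remaining ones. Be aware that the paper's own proof of Theorem \ref{Th_f_g} performs the identical step (``$q_1\parallel n,\ldots,q_k\parallel n$ if and only if $Q\parallel n$''), so your derivation faithfully reproduces the source; but as a proof of the statement as written it is incomplete at precisely this point.
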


Recall that $d$ is a unitary divisor of $n$ if $d\mid n$ and $(d,n/d)=1$. Notation $d\parallel n$. Let $\sigma^*(n)$, defined as the sum
of unitary divisors of $n$, be the unitary analogue of $\sigma(n)$. Properties of the function $\sigma^*(n)$,
compared to those of $\sigma(n)$ were investigated by several authors. See, e.g., Cohen \cite{Coh1960}, McCarthy \cite{McC1986},
Sitaramachandrarao and Suryanarayana \cite{SitSur1973}, Sitaramaiah and Subbarao \cite{SitSub2007}, Trudgian \cite{Tru2015}. For example, one has
\begin{equation*}
\sum_{n\le x} \sigma^*(n) = \frac{\pi^2 x^2}{12 \zeta(3)}+ O(x(\log x)^{5/3}).
\end{equation*}

In this paper we are looking for unitary analogues of formulas \eqref{sigma_1} and \eqref{sigma_k}. Theorem \ref{Th_gen} can be applied to
the function $g(n)=\sigma^*(n)/n$. However, in this case $(\mu*g)(p)=1/p$, $(\mu*g)(p^\nu)=(1-p)/p^\nu$ for any prime $p$ and any $\nu \ge 2$.
Hence the coefficients of the corresponding expansion can not be expressed by simple special functions, and we consider the obtained
identities unsatisfactory.

Let $(k,n)_{**}$ denote the greatest common unitary divisor of $k$ and $n$. Note that $d \parallel (k,n)_{**}$ holds true if
and only if $d\parallel k$ and $d\parallel n$. Bi-unitary analogues of the Ramanujan sums may be defined as follows:
\begin{equation*}
c^{**}_q(n) = \sum_{\substack{1\le k \le q\\ (k,q)_{**}=1}} \exp(2\pi ikn/q) \quad (q,n\in \N),
\end{equation*}
but the function $q\mapsto c_q(n)$ is not multiplicative, and its properties are not parallel to the sums $c_q(n)$ and $c^*_q(n)$.
The function $c^{**}_q(q)=\phi^{**}(q)$, called bi-unitary Euler function was investigated in our paper \cite{Tot2009}.

Therefore, we introduce in Section \ref{Sect_mod_Raman_sums} new analogues of the Ramanujan sums, denoted by $\widetilde{c}_q(n)$, also
associated with unitary divisors, and show that
\begin{equation} \label{sigma_k_modified}
\frac{\sigma^*((n_1,\ldots,n_k)_{*k})}{(n_1,\ldots,n_k)_{*k}} = \zeta(k+1) \sum_{q_1,\ldots,q_k=1}^{\infty}
\frac{\phi_{k+1}([q_1,\ldots,q_k])}{[q_1,\ldots,q_k]^{2(k+1)}} \widetilde{c}_{q_1}(n_1)\cdots \widetilde{c}_{q_k}(n_k)
\quad (n_1,\ldots,n_k\in \N),
\end{equation}
where $(n_1,\ldots,n_k)_{*k}$ denotes the greatest common unitary divisor of $n_1,\ldots,n_k\in \N$. Now formulas \eqref{sigma_k},
\eqref{sigma_k_unit} and \eqref{sigma_k_modified} are of the same shape. In the case $k=1$, identity \eqref{sigma_k_modified} gives
\begin{equation*}
\frac{\sigma^*(n)}{n} = \zeta(2) \sum_{q=1}^{\infty} \frac{\phi_2(q)}{q^4} \widetilde{c}_{q}(n) \quad (n\in \N),
\end{equation*}
which may be compared to \eqref{sigma_1}.

We also deduce a general result for arbitrary arithmetic functions $f$ of several variables (Theorem \ref{Th_f_gen_modified}), which is the analogue of
\cite[Th.\ 4.1]{Tot2017}, concerning the Ramanujan sums $c_q(n)$ and their unitary analogues $c^*_q(n)$. We point out that in the case $k=1$,
Theorem \ref{Th_f_gen_modified} is the analogue of the result of Delange \cite{Del1976}, concerning classical Ramanujan sums.
As applications, we consider the functions $f(n_1,\ldots,n_k)= g((n_1,\ldots,n_k)_{*k})$, where $g$ belongs to a large class of functions
of one variable, including $\sigma^*(n)/n$ and $\phi^*(n)/n$, where $\phi^*$ is the unitary Euler function (Theorem \ref{Th_f_g}).

For background material on classical Ramanujan sums and Ramanujan expansions (Ramanujan-Fourier series) of functions of one variable
we refer to the book by Schwarz and Spilker \cite{SchSpi1994} and to the survey papers by Lucht \cite{Luc2010} and  Ram~Murty \cite{Ram2013}.
Section \ref{Sect_Prelim} includes some general properties on arithmetic functions of one and several variables defined by unitary divisors,
needed in the present paper.

\section{Premiminaries} \label{Sect_Prelim}

\subsection{Notations} \label{Sect_Not}

\pont $\P$ is the set of (positive) primes,

\pont the prime power factorization of $n\in \N$ is $n=\prod_{p\in \P}
p^{\nu_p(n)}$, the product being over the primes $p$, where all but
a finite number of the exponents $\nu_p(n)$ are zero,

\pont $(f*g)(n)=\sum_{d\mid n} f(d)g(n/d)$ is the Dirichlet convolution of the functions $f,g:\N \to \C$,

\pont $\id_s$ is the function $\id_s(n)=n^s$ ($n\in \N, s\in \R$),

\pont $\1 =\id_0$ is the constant $1$ function,

\pont $\mu$ is the M\"obius function,

\pont $\omega(n)$ stands for the number of distinct prime divisors of $n$,

\pont $\phi_s$ is the Jordan function of order $s$ given by
$\phi_s(n)=n^s\prod_{p\mid n} (1-1/p^s)$ ($s\in \R$),

\pont $\phi=\phi_1$ is Euler's totient function,

\pont $d\parallel n$ means that $d$ is a unitary divisor of $n$, i.e., $d\mid n$ and $(d,n/d)=1$
(we remark that this is in concordance with the standard notation $p^{\nu} \parallel n$ used for prime powers $p^\nu$),

\pont $(k,n)_*=\max \{d: d\mid k, d\parallel n\}$,

\pont $c^*_q(n)=\sum_{1\le k \le q, (k,q)_*=1} \exp(2\pi ikn/q)$ are the unitary Ramanujan sums ($q,n\in \N$),

\pont $(n_1,\ldots,n_k)_{*k}$ denotes the greatest common unitary divisor of $n_1,\ldots,n_k\in \N$,

\pont $(n_1,n_2)_{**}=(n_1,n_2)_{*2}$,

\pont $\sigma^*_s(n)=\sum_{d\parallel n} d^s$ ($s\in \R$),

\pont $\sigma^*(n)=\sigma^*_1(n)$ is the sum of unitary divisors of $n$,

\pont $\tau^*(n)=\sigma^*_0(n)$ is the number of unitary divisors of $n$, which equals $2^{\omega(n)}$.

\subsection{Functions defined by unitary divisors} \label{Sect_Funct_unit}

The study of arithmetic functions defined by unitary divisors goes back to Vaidyanathaswamy \cite{Vai1931} and Cohen \cite{Coh1960}.
The function $\sigma^*(n)$ was already defined above. The analog of Euler's $\phi$ function is $\phi^*$,
defined by $\phi^*(n)=\# \{k\in \N: 1\le k \le n, (k,n)_*=1\}$. The functions $\sigma^*$ and
$\phi^*$ are multiplicative and $\sigma^*(p^\nu)= p^{\nu}+1$, $\phi^*(p^\nu)=p^{\nu}-1$ for any prime powers $p^{\nu}$
($\nu \ge 1$).

The unitary convolution of the functions $f$ and $g$ is
\begin{equation*}
(f\times g)(n)=\sum_{d\parallel n} f(d)g(n/d) \quad (n\in \N),
\end{equation*}
it preserves the multiplicativity of functions, and the inverse of the constant $1$ function under the unitary convolution is $\mu^*$,
where $\mu^*(n)=(-1)^{\omega(n)}$, also multiplicative. The set ${\cal A}$ of arithmetic functions forms a unital commutative ring
with pointwise addition and the unitary convolution, having divisors of zero.

\subsection{Modified unitary Ramanujan sums} \label{Sect_mod_Raman_sums}

For $q,n \in \N$ we introduce the functions $\widetilde{c}_q(n)$ by the formula
\begin{equation} \label{sum_modified_Raman_sum}
\sum_{d\parallel q} \widetilde{c}_d(n) = \begin{cases} q, & \text{ if $q\parallel  n$,}\\ 0, & \text{ if $q\nparallel n$.}
\end{cases}
\end{equation}

It follows that $\widetilde{c}_q(n)$ is multiplicative in $q$,
\begin{equation} \label{mod_Raman_prime_pow}
\widetilde{c}_{p^\nu}(n) = \begin{cases} p^\nu-1, & \text{ if $p^\nu \parallel n$,}\\ -1, & \text{ if $p^\nu \nparallel n$,}
\end{cases}
\end{equation}
for any prime powers $p^{\nu}$ ($\nu \ge 1$) and
\begin{equation*}
\widetilde{c}_q(n) = \sum_{d\parallel (n,q)_{**}} d\mu^*(q/d) \quad (q,n\in \N).
\end{equation*}

We will need the following result.

\begin{prop} \label{Prop_mod_Raman} For any $q,n\in \N$,
\begin{equation} \label{mod_Raman_abs_id}
\sum_{d\parallel q} |\widetilde{c}_d(n)| = 2^{\omega(q/(n,q)_{**})} (n,q)_{**},
\end{equation}
\begin{equation} \label{mod_Raman_abs_ineq}
\sum_{d\parallel q} |\widetilde{c}_d(n)| \le  2^{\omega(q)} n.
\end{equation}
\end{prop}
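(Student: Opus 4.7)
The plan is to reduce \eqref{mod_Raman_abs_id} to a computation on prime powers via multiplicativity, and then derive \eqref{mod_Raman_abs_ineq} as an immediate consequence.

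First, I would check that both sides of \eqref{mod_Raman_abs_id}, viewed as functions of $q$ with $n$ fixed, are multiplicative in $q$. For the left-hand side, the text already notes that $q \mapsto \widetilde{c}_q(n)$ is multiplicative, hence so is $q \mapsto |\widetilde{c}_q(n)|$, and since the unitary convolution $f \mapsto \sum_{d \parallel q} f(d)$ preserves multiplicativity (stated in Section~\ref{Sect_Funct_unit}), the left-hand side is multiplicative. For the right-hand side, if $q = q_1 q_2$ with $(q_1, q_2) = 1$, then the unitary divisors of $q$ are products of unitary divisors of $q_1$ and of $q_2$, and a unitary divisor $d_1 d_2$ of $q$ is a unitary divisor of $n$ iff each $d_i$ is; hence $(n,q)_{**} = (n,q_1)_{**} (n,q_2)_{**}$. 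Consequently $q/(n,q)_{**}$ is multiplicative in $q$, and $\omega$ is additive on coprime arguments, so $2^{\omega(q/(n,q)_{**})}(n,q)_{**}$ is multiplicative.

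Next, I would evaluate both sides at a prime power $q = p^\nu$ with $\nu \ge 1$. The unitary divisors of $p^\nu$ are only $1$ and $p^\nu$, so the left-hand side is $|\widetilde{c}_1(n)| + |\widetilde{c}_{p^\nu}(n)|$. From the formula $\widetilde{c}_q(n) = \sum_{d \parallel (n,q)_{**}} d\,\mu^*(q/d)$ we get $\widetilde{c}_1(n) = 1$, and by \eqref{mod_Raman_prime_pow} we obtain
\[
|\widetilde{c}_1(n)| + |\widetilde{c}_{p^\nu}(n)| = \begin{cases} 1 + (p^\nu - 1) = p^\nu, & \text{if } p^\nu \parallel n,\\ 1 + 1 = 2, & \text{if } p^\nu \nparallel n. \end{cases}
\]
On the right-hand side, $(n, p^\nu)_{**}$ equals $p^\nu$ if $p^\nu \parallel n$ and equals $1$ otherwise, so the right-hand side equals $2^0 \cdot p^\nu = p^\nu$ in the first case and $2^1 \cdot 1 = 2$ in the second. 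The two sides agree, and multiplicativity extends the identity to all $q$.

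For \eqref{mod_Raman_abs_ineq}, note that $(n,q)_{**}$ is by definition a unitary divisor of $n$, so $(n,q)_{**} \le n$; and $q/(n,q)_{**}$ is a divisor of $q$, so $\omega(q/(n,q)_{**}) \le \omega(q)$. Multiplying these two estimates and invoking \eqref{mod_Raman_abs_id} yields \eqref{mod_Raman_abs_ineq}. There is no real obstacle here; the only delicate point is confirming the multiplicativity of $q \mapsto (n,q)_{**}$, but this is transparent once one writes out unitary divisors in terms of prime-power blocks.
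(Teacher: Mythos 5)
Your proof is correct and follows essentially the same route as the paper: verify the identity \eqref{mod_Raman_abs_id} at prime powers using \eqref{mod_Raman_prime_pow} and extend by multiplicativity in $q$, then deduce \eqref{mod_Raman_abs_ineq} from $(n,q)_{**}\le n$ and $\omega(q/(n,q)_{**})\le\omega(q)$. You merely spell out the multiplicativity of the right-hand side and the final estimate in more detail than the paper does.
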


\begin{proof} If $q=p^\nu$ ($\nu \ge 1$) is a prime power, then we have by \eqref{mod_Raman_prime_pow},
\begin{equation*}
\sum_{d\parallel p^\nu} |c^*_d(n)| = |c^*_1(n)| + |c^*_{p^\nu}(n)|= \begin{cases} 1+p^{\nu}-1 = p^\nu, & \text{ if $p^\nu \parallel n$,} \\ 1+1=2,
& \text{ otherwise.}
\end{cases}
\end{equation*}

Now \eqref{mod_Raman_abs_id} follows at once by the multiplicativity in $q$ of the involved functions, while \eqref{mod_Raman_abs_ineq} is its
immediate consequence. \end{proof}

For classical Ramanujan sums the inequality corresponding to \eqref{mod_Raman_abs_ineq} is crucial in the proof of the theorem of Delange
\cite{Del1976},  while the identity corresponding to \eqref{mod_Raman_abs_id} was pointed out by Grytczuk \cite{Gry1981}. In the case of
unitary Ramanujan sums the counterparts of \eqref{mod_Raman_abs_id} and \eqref{mod_Raman_abs_ineq} were proved by the author
\cite[Prop.\ 3.1]{Tot2017}.

\begin{prop} For any $q,n\in \N$,
\begin{equation} \label{mod_Holder_id}
\widetilde{c}_q(n) = \frac{\phi^*(q) \mu^*(q/(n,q)_{**})}{\phi^*(q/(n,q)_{**})}.
\end{equation}
\end{prop}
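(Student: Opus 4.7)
The plan is to reduce the identity to prime powers by a multiplicativity argument and then check it by direct computation in two cases.

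First, I would verify that both sides of \eqref{mod_Holder_id} are multiplicative functions of $q$ (with $n$ fixed). For the left-hand side this is already recorded in the text, right after \eqref{sum_modified_Raman_sum}. For the right-hand side, $\phi^*$ and $\mu^*$ are multiplicative, so the only thing to check is that $q \mapsto (n,q)_{**}$ is multiplicative. This follows because if $q=q_1q_2$ with $(q_1,q_2)=1$, the prime supports of $q_1$ and $q_2$ are disjoint, and by the prime-power description of the greatest common unitary divisor one has $(n,q_1q_2)_{**}=(n,q_1)_{**}(n,q_2)_{**}$, with the two factors coprime; hence $q/(n,q)_{**}$ also splits multiplicatively, and both $\phi^*(q/(n,q)_{**})$ and $\mu^*(q/(n,q)_{**})$ are multiplicative in $q$.

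Once multiplicativity is established, it suffices to verify \eqref{mod_Holder_id} on a prime power $q=p^\nu$ with $\nu\ge 1$. Here I would split into the two cases distinguished by \eqref{mod_Raman_prime_pow}. If $p^\nu\parallel n$, then $(n,p^\nu)_{**}=p^\nu$ and $p^\nu/(n,p^\nu)_{**}=1$, so the right-hand side equals $\phi^*(p^\nu)\mu^*(1)/\phi^*(1)=p^\nu-1$, matching $\widetilde{c}_{p^\nu}(n)$. If $p^\nu\nparallel n$, then $(n,p^\nu)_{**}=1$ and $p^\nu/(n,p^\nu)_{**}=p^\nu$, so the right-hand side equals $\phi^*(p^\nu)\mu^*(p^\nu)/\phi^*(p^\nu)=\mu^*(p^\nu)=-1$, again matching \eqref{mod_Raman_prime_pow}.

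There is no real obstacle here: the only subtle point is the multiplicativity of $q\mapsto(n,q)_{**}$, and even this is essentially immediate from the prime-by-prime description of unitary gcd. I would keep the write-up short, stating the multiplicativity reduction and then the two-line case check on prime powers.
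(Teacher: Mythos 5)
Your proposal is correct and follows exactly the paper's own argument: reduce to prime powers $q=p^\nu$ by multiplicativity in $q$ and verify the two cases $p^\nu\parallel n$ and $p^\nu\nparallel n$ against \eqref{mod_Raman_prime_pow}. The only difference is that you spell out the (routine) multiplicativity of $q\mapsto(n,q)_{**}$, which the paper leaves implicit.
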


\begin{proof} Both sides of \eqref{mod_Holder_id} are multiplicative in $q$. If $q=p^\nu$ ($\nu \ge 1$) is a prime power, then
\begin{equation*}
\frac{\phi^*(p^\nu) \mu^*(p^{\nu}/(n,p^{\nu})_{**})} {\phi^*(p^{\nu}/(n,p^{\nu})_{**})} = \begin{cases} \frac{\phi^*(p^\nu)
\mu^*(1)}{\phi^*(1)} = p^\nu-1,
& \text{ if $p^\nu \parallel n$,} \\ \frac{\phi^*(p^\nu) \mu^*(p^{\nu})} {\phi^*(p^{\nu})}=-1,
& \text{ otherwise.} \end{cases} = c_{p^\nu}(n),
\end{equation*}
by \eqref{mod_Raman_prime_pow}. \end{proof}

For the Ramanujan sums $c_q(n)$ the identity similar to \eqref{mod_Holder_id} is usually attributed to H\"older, but was proved earlier by
Kluyver \cite{Klu1906}. In the case of the unitary Ramanujan sums $c^*_q(n)$ the counterpart of \eqref{mod_Holder_id} was deduced by
Suryanarayana \cite{Sur1970}.

Basic properties (including those mentioned above) of the classical Ramanujan sums $c_q(n)$, their unitary analogues $c^*_q(n)$ and the
modified sums $\widetilde{c}_q(n)$ can be compared by the next table.

\medskip \medskip

{\footnotesize {\tabulinesep=1.2mm
\begin{tabu} {|c|c|c|}
\hline
$\displaystyle c_q(n)= \sum_{d\mid (n,q)} d\mu(q/d)$  &
$\displaystyle c^*_q(n)= \sum_{d\mid (n,q)_*} d\mu^*(q/d)$ &
$\displaystyle \widetilde{c}_q(n)=\sum_{d\parallel (n,q)_{**}} d\mu^*(q/d)$
\\\hline
$\displaystyle c_q(n) = \frac{\phi(q) \mu(q/(n,q))}{\phi(q/(n,q)}$ &
$\displaystyle c^*_q(n) = \frac{\phi^*(q) \mu^*(q/(n,q)_{*})}{\phi^*(q/(n,q)_{*})}$ &
$\displaystyle \widetilde{c}_q(n) = \frac{\phi^*(q) \mu^*(q/(n,q)_{**})}{\phi^*(q/(n,q)_{**})}$
\\\hline
$\displaystyle c_{p^\nu}(n)= \begin{cases} p^\nu-p^{\nu-1}, &\text{if $p^\nu \mid n$,}
\\ -p^{\nu-1}, &\text{if $p^{\nu-1}\parallel n$}, \\ 0, &\text{if $p^{\nu-1}\nmid n$} \end{cases}$ &
$\displaystyle c^*_{p^\nu}(n) = \begin{cases} p^\nu-1, & \text{if $p^\nu \mid n$,} \\ -1, & \text{if $p^\nu \nmid n$} \end{cases}$ &
$\displaystyle \widetilde{c}_{p^\nu}(n) = \begin{cases} p^\nu-1, & \text{if $p^\nu \parallel n$,} \\ -1, & \text{if $p^\nu \nparallel n$} \end{cases}$
\\\hline
$\displaystyle \sum_{d\mid q} c_d(n) =  \begin{cases} q, &\text{if $q\mid n$,} \\ 0, &\text{if $q\nmid n$} \end{cases}$ &
$\displaystyle \sum_{d\parallel q} c^*_d(n) =  \begin{cases} q, &\text{if $q\mid n$,} \\ 0, &\text{if $q\nmid n$} \end{cases}$ &
$\displaystyle \sum_{d\parallel q} \widetilde{c}_d(n) =  \begin{cases} q,
& \text{if $q\parallel n$,} \\ 0, &\text{if $q\nparallel n$} \end{cases}$
\\\hline
$\displaystyle \sum_{d\mid q} |c_d(n)| = 2^{\omega(q/(n,q))}(n,q)$  &
$\displaystyle \sum_{d\parallel q} |c^*_d(n)| =  2^{\omega(q/(n,q)_*)}(n,q)_*$ &
$\displaystyle \sum_{d\parallel q} |\widetilde{c}_d(n)| = 2^{\omega(q/(n,q)_{**})}(n,q)_{**}$
\\\hline
\end{tabu}}
}

\medskip
\centerline{Table: Properties of $c_q(n)$, $c^*_q(n)$ and $\widetilde{c}_q(n)$}

\medskip

\subsection{Arithmetic functions of several variables} \label{Sect_func_several_var}

For every fixed $k\in \N$ the set ${\cal A}_k$ of arithmetic functions $f:\N^k\to \C$ of
$k$ variables is a unital commutative ring with pointwise addition and the unitary convolution defined by
\begin{equation} \label{unit_convo_sev_var}
(f\times g)(n_1,\ldots,n_k)= \sum_{d_1\parallel n_1, \ldots, d_k\parallel n_k}
f(d_1,\ldots,d_k) g(n_1/d_1, \ldots, n_k/d_k),
\end{equation}
the unity being the function $\delta_k$, where
\begin{equation*}
\delta_k(n_1,\ldots,n_k)= \begin{cases} 1, & \text{ if $n_1=\cdots =n_k=1$,}\\ 0, & \text{ otherwise.}
\end{cases}
\end{equation*}

The inverse of the constant $1$ function under \eqref{unit_convo_sev_var} is $\mu^*_k$, given by
\begin{equation*}
\mu^*_k(n_1,\ldots,n_k)=\mu^*(n_1)\cdots \mu^*(n_k)=(-1)^{\omega(n_1)+\cdots +\omega(n_k)} \quad (n_1,\ldots,n_k\in \N).
\end{equation*}

A function $f\in {\cal A}_k$ is said to be multiplicative if it is
not identically zero and
\begin{equation*}
f(m_1n_1,\ldots,m_kn_k)= f(m_1,\ldots,m_k) f(n_1,\ldots,n_k)
\end{equation*}
holds for any $m_1,\ldots,m_k,n_1,\ldots,n_k\in \N$ such that
$(m_1\cdots m_k,n_1\cdots n_k)=1$.

If $f$ is multiplicative, then it is determined by the values
$f(p^{\nu_1},\ldots,p^{\nu_k})$, where $p$ is prime and
$\nu_1,\ldots,\nu_k\in \N \cup \{0\}$. More exactly, $f(1,\ldots,1)=1$ and
for any $n_1,\ldots,n_k\in \N$,
\begin{equation*}
f(n_1,\ldots,n_k)= \prod_{p\in \P} f(p^{\nu_p(n_1)}, \ldots,p^{\nu_p(n_k)}).
\end{equation*}

Similar to the one dimensional case, the unitary convolution \eqref{unit_convo_sev_var} preserves the multiplicativity of
functions. See our paper \cite{Tot2014}, which is a survey on (multiplicative) arithmetic functions of several variables.

\section{Main results}

We first prove the following general result.

\begin{theorem} \label{Th_f_gen_modified} Let $f:\N^k \to \C$ be an arithmetic function \textup{($k\in \N$)}.
Assume that
\begin{equation} \label{cond_several_var}
\sum_{n_1,\ldots,n_k=1}^{\infty} 2^{\omega(n_1)+\cdots + \omega(n_k)} \frac{|(\mu^*_k\times f)(n_1,\ldots,n_k)|}{n_1\cdots n_k}< \infty.
\end{equation}

Then for every $n_1,\ldots,n_k\in \N$,
\begin{equation} \label{f_Raman_mod_several}
f(n_1,\ldots,n_k) = \sum_{q_1,\ldots,q_k=1}^{\infty} \widetilde{a}_{q_1,\ldots,q_k} \widetilde{c}_{q_1}(n_1)\cdots \widetilde{c}_{q_k}(n_k),
\end{equation}
where
\begin{equation} \label{coeff_mod}
\widetilde{a}_{q_1,\ldots,q_k} = \sum_{\substack{m_1,\ldots,m_k=1\\(m_1,q_1)=1,\ldots,(m_k,q_k)=1}}^{\infty}
\frac{(\mu^*_k \times f)(m_1q_1,\ldots,m_kq_k)}{m_1q_1\cdots m_kq_k},
\end{equation}
the series \eqref{f_Raman_mod_several} being absolutely convergent.
\end{theorem}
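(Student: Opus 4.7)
The plan is to mimic the one-variable proof of Delange's theorem, substituting unitary convolution and the $\widetilde{c}_q$ sums for ordinary convolution and classical Ramanujan sums. Write $h=\mu^*_k\times f$. Since $\mu^*_k$ is the inverse of the constant-$1$ function under the $k$-variable unitary convolution (see Section \ref{Sect_func_several_var}), the identity $f=\mathbf{1}\times h$ reads
\[
f(n_1,\ldots,n_k)=\sum_{r_1\parallel n_1,\ldots,r_k\parallel n_k} h(r_1,\ldots,r_k),
\]
so the goal is to recover this from the proposed expansion.

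First I would verify absolute convergence. Substituting the definition of $\widetilde{a}_{q_1,\ldots,q_k}$ from \eqref{coeff_mod} into the right-hand side of \eqref{f_Raman_mod_several} and taking absolute values, one may reparametrize each pair $(m_i,q_i)$ with $(m_i,q_i)=1$ as $r_i=m_iq_i$ with $q_i\parallel r_i$; this is a bijection. Fubini/Tonelli then gives the upper bound
\[
\sum_{r_1,\ldots,r_k=1}^{\infty}\frac{|h(r_1,\ldots,r_k)|}{r_1\cdots r_k}\prod_{i=1}^{k}\sum_{q_i\parallel r_i}|\widetilde{c}_{q_i}(n_i)|.
\]
Applying the inequality \eqref{mod_Raman_abs_ineq} of Proposition \ref{Prop_mod_Raman} to each inner sum produces the upper bound
\[
n_1\cdots n_k\sum_{r_1,\ldots,r_k=1}^{\infty}2^{\omega(r_1)+\cdots+\omega(r_k)}\frac{|(\mu^*_k\times f)(r_1,\ldots,r_k)|}{r_1\cdots r_k},
\]
which is finite by hypothesis \eqref{cond_several_var}. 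This both secures convergence and legitimates the rearrangements below.

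With absolute convergence in hand, I would perform the rearrangement without absolute values. Substituting \eqref{coeff_mod}, applying the same bijection $(m_i,q_i)\leftrightarrow r_i=m_iq_i$ with $q_i\parallel r_i$, and pulling $h(r_1,\ldots,r_k)/(r_1\cdots r_k)$ out front yields
\[
\sum_{r_1,\ldots,r_k=1}^{\infty}\frac{h(r_1,\ldots,r_k)}{r_1\cdots r_k}\prod_{i=1}^{k}\sum_{q_i\parallel r_i}\widetilde{c}_{q_i}(n_i).
\]
By the defining relation \eqref{sum_modified_Raman_sum} of $\widetilde{c}$, the inner sum over $q_i\parallel r_i$ equals $r_i$ when $r_i\parallel n_i$ and vanishes otherwise. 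Thus the product in $i$ is $r_1\cdots r_k$ precisely when $r_i\parallel n_i$ for every $i$, and $0$ otherwise, collapsing the whole expression to $\sum_{r_1\parallel n_1,\ldots,r_k\parallel n_k}h(r_1,\ldots,r_k)=f(n_1,\ldots,n_k)$.

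The only delicate point is the absolute-convergence step, which hinges on the bound \eqref{mod_Raman_abs_ineq}; everything else is essentially a bookkeeping exercise in swapping the sums over $(q_i)$ and $(m_i)$ into a single sum over $(r_i)$. The factor $2^{\omega(n_1)+\cdots+\omega(n_k)}$ in the hypothesis is precisely what is needed to absorb the $2^{\omega(r_i)}$ coming from Proposition \ref{Prop_mod_Raman}, so the proof of the theorem is expected to be short once this is observed.
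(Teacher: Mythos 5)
Your proof is correct and is essentially the paper's own argument: both rest on the identity $f=\1\times(\mu^*_k\times f)$, the bijection between pairs $(m_i,q_i)$ with $(m_i,q_i)=1$ and $r_i=m_iq_i$ with $q_i\parallel r_i$, the defining relation \eqref{sum_modified_Raman_sum}, and absolute convergence via \eqref{mod_Raman_abs_ineq} and \eqref{cond_several_var}. The only cosmetic difference is that you run the computation from the expansion back to $f$, whereas the paper runs it from $f$ to the expansion.
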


\begin{proof} We have for any $n_1,\ldots,n_k\in \N$, by using
property \eqref{sum_modified_Raman_sum},
\begin{equation*}
f(n_1,\ldots,n_k) = \sum_{d_1\parallel n_1, \ldots, d_k\parallel n_k} (\mu^*_k \times f)(d_1,\ldots,d_k)
\end{equation*}
\begin{equation*}
= \sum_{d_1,\ldots,d_k=1}^{\infty} \frac{(\mu^*_k \times f)(d_1,\ldots,d_k)}{d_1\cdots d_k}
\sum_{q_1\parallel d_1} \widetilde{c}_{q_1}(n_1) \cdots \sum_{q_k\parallel d_k} \widetilde{c}_{q_k}(n_k)
\end{equation*}
\begin{equation*}
= \sum_{q_1,\ldots,q_k=1}^{\infty} \widetilde{c}_{q_1}(n_1) \cdots \widetilde{c}_{q_k}(n_k) \sum_{\substack{d_1,\ldots,d_k=1\\
q_1\parallel d_1,\ldots,q_k\parallel d_k}}^{\infty} \frac{(\mu^*_k \times f)(d_1,\ldots,d_k)}{d_1\cdots d_k},
\end{equation*}
leading to expansion \eqref{f_Raman_mod_several} with the coefficients \eqref{coeff_mod}, by
denoting $d_1=m_1q_1,\ldots,d_k=m_kq_k$. The rearranging of the terms is justified by the absolute convergence
of the multiple series, shown hereinafter:
\begin{equation*}
\sum_{q_1,\ldots,q_k=1}^{\infty} |\widetilde{a}_{q_1,\ldots,q_k}| |\widetilde{c}_{q_1}(n_1)| \cdots |\widetilde{c}_{q_k}(n_k)|
\end{equation*}
\begin{equation*}
\le \sum_{\substack{q_1,\ldots,q_k=1\\m_1,\ldots,m_k=1\\(m_1,q_1)=1,\ldots,(m_k,q_k)=1}}^{\infty}
\frac{|(\mu^*_k \times f)(m_1q_1,\ldots,m_kq_k)|}{m_1q_1\cdots m_kq_k} |\widetilde{c}_{q_1}(n_1)| \cdots |\widetilde{c}_{q_k}(n_k)|
\end{equation*}
\begin{equation*}
= \sum_{t_1,\ldots,t_k=1}^{\infty} \frac{|(\mu^*_k \times f)(t_1,\ldots,t_k)|}{t_1\cdots t_k}
\sum_{\substack{m_1q_1=t_1\\ (m_1,q_1)=1}} |\widetilde{c}_{q_1}(n_1)| \cdots  \sum_{\substack{m_kq_k=t_k\\ (m_k,q_k)=1}} |\widetilde{c}_{q_k}(n_k)|
\end{equation*}
\begin{equation*}
\le n_1\cdots n_k \sum_{t_1,\ldots,t_k=1}^{\infty} 2^{\omega(t_1)+\cdots +\omega(t_k)} \frac{|(\mu^*_k \times f)(t_1,\ldots,t_k)|}{t_1\cdots t_k}
<\infty,
\end{equation*}
by using inequality \eqref{mod_Raman_abs_ineq} and condition \eqref{cond_several_var}.
\end{proof}

Next we consider the case $f(n_1,\ldots,n_k)= g((n_1,\ldots,n_k)_{*k})$. The following result is the analogue of Theorem \ref{Th_gen}.

\begin{theorem} \label{Th_f_g} Let $g:\N \to \C$ be an arithmetic function and let $k\in \N$. Assume that
\begin{equation*}
\sum_{n=1}^{\infty} 2^{k\, \omega(n)} \frac{|(\mu^* \times g)(n)|}{n^k} < \infty.
\end{equation*}

Then for every $n_1,\ldots,n_k\in \N$,
\begin{equation*}
g((n_1,\ldots,n_k)_{*k}) = \sum_{q_1,\ldots,q_k=1}^{\infty} \widetilde{a}_{q_1,\ldots,q_k} \widetilde{c}_{q_1}(n_1) \cdots \widetilde{c}_{q_k}(n_k),
\end{equation*}
is absolutely convergent, where
\begin{equation} \label{coeff_tilde_a}
\widetilde{a}_{q_1,\ldots,q_k} = \frac1{Q^k} \sum_{\substack{m=1\\ (m,Q)=1}}^{\infty} \frac{(\mu^* \times g)(mQ)}{m^k},
\end{equation}
with the notation $Q=[q_1,\ldots,q_k]$.
\end{theorem}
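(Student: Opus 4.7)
The plan is to apply Theorem \ref{Th_f_gen_modified} to the function $f:\N^k\to\C$ defined by $f(n_1,\ldots,n_k):=g((n_1,\ldots,n_k)_{*k})$. The work then splits into three steps: (i) identify $\mu^*_k\times f$; (ii) deduce the convergence hypothesis \eqref{cond_several_var} from the one-variable hypothesis on $g$; (iii) simplify the general coefficient \eqref{coeff_mod} into the form \eqref{coeff_tilde_a}.

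For step (i), I would start from the one-variable unitary M\"obius inversion $g(m)=\sum_{e\parallel m}(\mu^*\times g)(e)$. Since $e\parallel (n_1,\ldots,n_k)_{*k}$ is equivalent to $e\parallel n_i$ for every $i$, this yields
\[ f(n_1,\ldots,n_k)=\sum_{\substack{e\ge 1\\ e\parallel n_i\,\forall i}} (\mu^*\times g)(e), \]
exhibiting $f$ as $1_k\times h$ in ${\cal A}_k$, where $h$ is the function supported on the diagonal with $h(d,\ldots,d)=(\mu^*\times g)(d)$ and $h=0$ off the diagonal. M\"obius inversion in ${\cal A}_k$ then gives
\[ (\mu^*_k\times f)(d_1,\ldots,d_k)=\begin{cases}(\mu^*\times g)(d) & \text{if } d_1=\cdots=d_k=d,\\ 0 & \text{otherwise.}\end{cases} \]
It is this diagonal support that drives the subsequent collapse of all multi-index sums.

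Step (ii) is then immediate: substituting the formula above into \eqref{cond_several_var} reduces the $k$-fold sum to $\sum_{d\ge 1}2^{k\omega(d)}|(\mu^*\times g)(d)|/d^k$, which is finite by the hypothesis of the theorem, so Theorem \ref{Th_f_gen_modified} applies. For step (iii), the diagonal support of $\mu^*_k\times f$ restricts the summation in \eqref{coeff_mod} to tuples $(m_1,\ldots,m_k)$ with $m_1q_1=\cdots=m_kq_k=:N$. Writing $N=mQ$ where $Q=[q_1,\ldots,q_k]$ gives $m_i=mQ/q_i$ and $m_1q_1\cdots m_kq_k=N^k=m^kQ^k$, so the coefficient collapses to
\[ \widetilde{a}_{q_1,\ldots,q_k}=\frac{1}{Q^k}\sum_{\substack{m\ge 1\\ (m,Q)=1}}\frac{(\mu^*\times g)(mQ)}{m^k}, \]
matching \eqref{coeff_tilde_a}. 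Absolute convergence of the resulting series is inherited directly from the conclusion of Theorem \ref{Th_f_gen_modified}.

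The main delicate point, and the step I expect to be the hardest, is the coprimality bookkeeping in (iii): one has to verify prime by prime that the joint conditions $(m_i,q_i)=1$ for every $i$, together with $m_iq_i=N$, collapse after the reparameterization $N=mQ$ to the single condition $(m,Q)=1$. This hinges on the characterization of unitary divisibility through the constraints $\nu_p(q_i)\in\{0,\nu_p(Q)\}$ at each prime, and it is the one place where the structure of the LCM $Q$ interacts nontrivially with the individual $q_i$'s; everything else is a formal consequence of M\"obius inversion in ${\cal A}_k$ together with Theorem \ref{Th_f_gen_modified}.
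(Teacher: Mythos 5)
Your proposal reconstructs the paper's own proof essentially verbatim: the same reduction to Theorem \ref{Th_f_gen_modified}, the same identification of $\mu^*_k\times f$ as the function supported on the diagonal with value $(\mu^*\times g)(d)$ (from which the convergence hypothesis \eqref{cond_several_var} follows at once), and the same collapse of the coefficient sum \eqref{coeff_mod}. Your steps (i) and (ii) are correct and coincide with the paper's argument.

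The step you single out as ``the hardest'' and then do not actually carry out is, however, precisely where the argument is incomplete --- and your instinct that the coprimality bookkeeping is the delicate point is well founded, because the claimed collapse is not an equivalence. The joint conditions $m_iq_i=N$, $(m_i,q_i)=1$ for all $i$ say exactly that $q_i\parallel N$ for every $i$; this \emph{implies} $Q=[q_1,\ldots,q_k]\parallel N$, but the converse fails: for $k=2$, $q_1=2$, $q_2=4$, $N=4$ one has $Q=4\parallel 4$ while $2\nparallel 4$. Prime by prime, $q_i\parallel N$ for all $i$ forces every nonzero exponent $\nu_p(q_i)$ to equal $\nu_p(N)$, so the summation set $\{N:\ q_i\parallel N\ \forall i\}$ is \emph{empty} unless $q_i\parallel Q$ for every $i$, and only in that case does it coincide with $\{mQ:\ (m,Q)=1\}$. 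Hence the coefficient actually delivered by Theorem \ref{Th_f_gen_modified} vanishes for ``incompatible'' tuples such as $(q_1,q_2)=(2,4)$, whereas \eqref{coeff_tilde_a} is generally nonzero there; the two expressions agree only on tuples with $q_i\parallel Q$ for all $i$. You should be aware that the paper's own proof asserts the same unproved equivalence (``$q_1\parallel n,\ldots,q_k\parallel n$ if and only if $Q\parallel n$''), so you have faithfully reproduced its argument, gap included; but the gap is real --- evaluating both sides of \eqref{sigma_s_mod} at $n_1=\cdots=n_k=1$ for $k=2$ via the Euler product shows the extra ``incompatible'' terms do not cancel --- so a correct version must either restrict the expansion to tuples with every $q_i$ a unitary divisor of $Q$ or set the remaining coefficients to zero. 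For $k=1$ the issue disappears and your argument is complete.
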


\begin{proof} We apply Theorem \ref{Th_f_gen_modified}. Taking into account the identity
\begin{equation*}
g((n_1,\ldots,n_k)_{*k})= \sum_{d\parallel n_1,\ldots,d\parallel n_k} (\mu^* \times g)(d)
\end{equation*}
we see that now
\begin{equation*}
(\mu^*_k \times f)(n_1,\ldots,n_k)= \begin{cases} (\mu^* \times g)(n), & \text{ if $n_1=\cdots = n_k=n$,}\\ 0, & \text{ otherwise.}
\end{cases}
\end{equation*}

Therefore the coefficients of the expansion are
\begin{equation*}
\widetilde{a}_{q_1,\ldots,q_k} = \sum_{\substack{n=1\\ m_1q_1=\cdots = m_kq_k=n\\(m_1,q_1)=1,\ldots,(m_k,q_k)=1}}^{\infty}
\frac{(\mu^*_k \times f)(m_1q_1,\ldots,m_kq_k)}{m_1q_1\cdots m_kq_k}
\end{equation*}
\begin{equation*}
=  \sum_{\substack{n=1\\ q_1\parallel n, \ldots, q_k\parallel n}}^{\infty}
\frac{(\mu^* \times g)(n)}{n^k},
\end{equation*}
and we use that $q_1\parallel n,\ldots,q_k\parallel n$ holds if and only if $[q_1,\ldots,q_k]=Q\parallel n$, that is,
$n=mQ$ with $(m,Q)=1$.
\end{proof}

\begin{cor} \label{Cor_sigma_s_star} For every $n_1,\ldots,n_k\in \N$ the following series are absolutely convergent:
\begin{equation} \label{sigma_s_mod}
\frac{\sigma^*_s((n_1,\ldots,n_k)_{*k})}{(n_1,\ldots,n_k)_{*k}^s} = \zeta(s+k) \sum_{q_1,\ldots,q_k=1}^{\infty}
\frac{\phi_{s+k}(Q) \widetilde{c}_{q_1}(n_1)\cdots \widetilde{c}_{q_k}(n_k)}{Q^{2(s+k)}} \quad (s\in \R, s+k>1),
\end{equation}
\begin{equation} \label{tau_star}
\tau^*((n_1,\ldots,n_k)_{*k}) = \zeta(k) \sum_{q_1,\ldots,q_k=1}^{\infty}
\frac{\phi_k(Q) \widetilde{c}_{q_1}(n_1)\cdots \widetilde{c}_{q_k}(n_k)}{Q^{2k}} \quad (k\ge 2).
\end{equation}
\end{cor}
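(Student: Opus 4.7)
The plan is to derive both identities as applications of Theorem \ref{Th_f_g}, noting that \eqref{tau_star} will be the special case $s=0$ of \eqref{sigma_s_mod} (which is why the condition there sharpens to $k\ge 2$).

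First I would take $g(n) = \sigma^*_s(n)/n^s$, which is multiplicative since $\sigma^*_s$ is. The key computation is identifying $\mu^* \times g$. At a prime power, the only unitary divisors of $p^\nu$ are $1$ and $p^\nu$, so
\begin{equation*}
(\mu^* \times g)(p^\nu) = g(p^\nu) - g(1) = \frac{p^{\nu s}+1}{p^{\nu s}} - 1 = \frac{1}{p^{\nu s}}.
\end{equation*}
By multiplicativity this gives $(\mu^* \times g)(n) = 1/n^s$ for every $n\in\N$. This is the clean identification that makes the approach work.

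Next I would verify the convergence hypothesis of Theorem \ref{Th_f_g}, namely $\sum_{n} 2^{k\omega(n)}|(\mu^*\times g)(n)|/n^k = \sum_n 2^{k\omega(n)}/n^{s+k}<\infty$. This is handled by the Euler product
\begin{equation*}
\prod_{p\in\P}\left(1+\frac{2^k p^{-(s+k)}}{1-p^{-(s+k)}}\right),
\end{equation*}
which converges whenever $s+k>1$, matching the stated hypothesis.

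Then I would evaluate the coefficient \eqref{coeff_tilde_a}. Substituting $(\mu^*\times g)(mQ) = 1/(mQ)^s$,
\begin{equation*}
\widetilde{a}_{q_1,\ldots,q_k} = \frac{1}{Q^{k+s}} \sum_{\substack{m=1\\(m,Q)=1}}^{\infty}\frac{1}{m^{s+k}} = \frac{1}{Q^{k+s}} \cdot \zeta(s+k)\prod_{p\mid Q}\left(1-\frac{1}{p^{s+k}}\right) = \frac{\zeta(s+k)\,\phi_{s+k}(Q)}{Q^{2(s+k)}},
\end{equation*}
using the definition of the Jordan function. This is exactly the coefficient appearing in \eqref{sigma_s_mod}. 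Finally, specializing to $s=0$ gives $g = \sigma_0^* = \tau^*$ with $(\mu^*\times g) = \1$, and the convergence condition becomes $k>1$, i.e., $k\ge 2$; the coefficient evaluation collapses to $\zeta(k)\phi_k(Q)/Q^{2k}$, yielding \eqref{tau_star}. There is no real obstacle here — the only subtlety is making sure that $\mu^*\times g$ is computed under the \emph{unitary} convolution (not the Dirichlet convolution), so that the cancellation at prime powers uses only the two unitary divisors $1$ and $p^\nu$.
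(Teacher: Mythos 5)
Your proposal is correct and follows essentially the same route as the paper: apply Theorem \ref{Th_f_g} to $g=\sigma^*_s/\id_s$, identify $(\mu^*\times g)(n)=1/n^s$ (the paper does this via the convolution identity $\mu^*\times\frac{\1\times\id_s}{\id_s}=\frac{\1}{\id_s}$ rather than your prime-power check, but the two are interchangeable), and evaluate the coefficient sum $\sum_{(m,Q)=1}m^{-(s+k)}=\zeta(s+k)\phi_{s+k}(Q)/Q^{s+k}$. Your explicit verification of the convergence hypothesis and the $s=0$ specialization for \eqref{tau_star} are both accurate.
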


\begin{proof} Apply Theorem \ref{Th_f_g} to $g(n)=\sigma^*_s(n)/n^s$. Here
\begin{equation*}
\mu^* \times g =\mu^* \times \frac{\1 \times \id_s}{\id_s} = (\mu^* \times \1) \times \frac{\1}{\id_s} = \frac{\1}{\id_s},
\end{equation*}
hence $(\mu^* \times g)(n) =1/n^s$ ($n\in \N$). We deduce by \eqref{coeff_tilde_a} that
\begin{equation*}
\widetilde{a}_{q_1,\ldots,q_k} = \frac1{Q^{s+k}} \sum_{\substack{m=1\\ (m,Q)=1}}^{\infty} \frac1{m^{s+k}}
= \zeta(s+k) \frac{\phi_{s+k}(Q)}{Q^{2(s+k)}},
\end{equation*}
which completes the proof.
\end{proof}

In the case $s=1$ identity \eqref{sigma_s_mod} reduces to \eqref{sigma_k_modified}. Now let consider the function $\phi^*_s(n) =
\prod_{p^\nu \parallel n} (p^{s\nu}-1)$, representing the unitary Jordan function of order $s$. Here $\phi^*_s= \mu^* \times \id_s$, and
$\phi^*_1=\phi^*$ is the unitary Euler function, already mentioned in Section \ref{Sect_Funct_unit}.

\begin{cor} \label{Cor_phi_s_star} For every $n_1,\ldots,n_k\in \N$ the following series are absolutely convergent:
\begin{equation} \label{mod_phi_s_star}
\frac{\phi^*_s((n_1,\ldots,n_k)_{*k})}{(n_1,\ldots,n_k)_{*k}^s} = \zeta(s+k)\prod_{p\in \P} \left( 1-\frac{2}{p^{s+k}}\right) \times
\end{equation}
\begin{equation*}
\times \sum_{q_1,\ldots,q_k=1}^{\infty}
\frac{\mu^*(Q) \phi_{s+k}(Q) \widetilde{c}_{q_1}(n_1)\cdots \widetilde{c}_{q_k}(n_k)}{Q^{2(s+k)}\prod_{p\mid Q} (1-2/p^{s+k})}
\quad (s\in \R, s+k>1),
\end{equation*}
\begin{equation*}
\frac{\phi^*((n_1,\ldots,n_k)_{*k})}{(n_1,\ldots,n_k)_{*k}} = \zeta(k+1)\prod_{p\in \P} \left( 1-\frac{2}{p^{k+1}}\right) \times
\end{equation*}
\begin{equation*}
\times \sum_{q_1,\ldots,q_k=1}^{\infty}
\frac{\mu^*(Q) \phi_{k+1}(Q)\widetilde{c}_{q_1}(n_1)\cdots \widetilde{c}_{q_k}(n_k)}{Q^{2(k+1)}\prod_{p\mid Q} (1-2/p^{k+1})}
\quad (k\ge 1).
\end{equation*}
\end{cor}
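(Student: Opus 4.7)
The plan is to apply Theorem \ref{Th_f_g} to the function $g(n) = \phi^*_s(n)/n^s$, exactly as was done for $\sigma^*_s$ in Corollary \ref{Cor_sigma_s_star}. The second formula is just the case $s=1$ of the first, so I focus on \eqref{mod_phi_s_star}.

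First I would compute $\mu^* \times g$. Since $\phi^*_s = \mu^* \times \id_s$, the function $g$ is multiplicative with $g(p^\nu) = (p^{s\nu}-1)/p^{s\nu} = 1 - 1/p^{s\nu}$. The convolution $\mu^* \times g$ is therefore also multiplicative, and on a prime power it evaluates to
\begin{equation*}
(\mu^* \times g)(p^\nu) = g(p^\nu) - g(1) = -\frac{1}{p^{s\nu}}.
\end{equation*}
Multiplicativity then yields the clean identity $(\mu^* \times g)(n) = \mu^*(n)/n^s$ for every $n \in \N$.

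Next I would verify the hypothesis of Theorem \ref{Th_f_g}. The series becomes $\sum_{n\ge 1} 2^{k\omega(n)}/n^{s+k}$, whose Euler product $\prod_p \bigl(1 + 2^k/(p^{s+k}-1)\bigr)$ converges precisely under the stated condition $s+k>1$. Hence Theorem \ref{Th_f_g} applies and gives the expansion with coefficients
\begin{equation*}
\widetilde{a}_{q_1,\ldots,q_k} = \frac{1}{Q^k}\sum_{\substack{m=1\\(m,Q)=1}}^{\infty}\frac{\mu^*(mQ)}{m^k (mQ)^s} = \frac{\mu^*(Q)}{Q^{s+k}}\sum_{\substack{m=1\\(m,Q)=1}}^{\infty}\frac{\mu^*(m)}{m^{s+k}},
\end{equation*}
where we used that $\mu^*(mQ)=\mu^*(m)\mu^*(Q)$ when $(m,Q)=1$.

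The last step is to evaluate the restricted Dirichlet series by its Euler product. For each prime $p$ one computes $\sum_{\nu\ge 0}\mu^*(p^\nu)/p^{\nu(s+k)} = (1-2/p^{s+k})/(1-1/p^{s+k})$, so
\begin{equation*}
\sum_{\substack{m=1\\(m,Q)=1}}^{\infty}\frac{\mu^*(m)}{m^{s+k}} = \prod_{p\nmid Q}\frac{1-2/p^{s+k}}{1-1/p^{s+k}} = \zeta(s+k)\prod_{p\in\P}\Bigl(1-\frac{2}{p^{s+k}}\Bigr)\cdot\frac{\prod_{p\mid Q}(1-1/p^{s+k})}{\prod_{p\mid Q}(1-2/p^{s+k})}.
\end{equation*}
Substituting $\prod_{p\mid Q}(1-1/p^{s+k}) = \phi_{s+k}(Q)/Q^{s+k}$ and combining with the prefactor $\mu^*(Q)/Q^{s+k}$ gives precisely the coefficient appearing in \eqref{mod_phi_s_star}. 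The main (minor) obstacle is bookkeeping this Euler-product factorization and separating the global factor $\prod_p(1-2/p^{s+k})$ from the local correction $\prod_{p\mid Q}(1-2/p^{s+k})$; everything else is routine once $(\mu^*\times g)(n)=\mu^*(n)/n^s$ is in hand.
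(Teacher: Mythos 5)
Your proposal is correct and follows essentially the same route as the paper: apply Theorem \ref{Th_f_g} to $g(n)=\phi^*_s(n)/n^s$, establish $(\mu^*\times g)(n)=\mu^*(n)/n^s$ (the paper does this by a global convolution identity rather than your prime-power check, but that is a cosmetic difference), and evaluate the coefficient \eqref{coeff_tilde_a} via the same Euler-product factorization. Your explicit verification of the convergence hypothesis is a small bonus the paper leaves implicit.
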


\begin{proof} Apply Theorem \ref{Th_f_g} to $g(n)=\phi^*_s(n)/n^s$. Here
\begin{equation*}
\mu^* \times g =\mu^* \times \frac{\mu^* \times \id_s}{\id_s} = (\mu^* \times \1) \times \frac{\mu^*}{\id_s} = \frac{\mu^*}{\id_s},
\end{equation*}
that is, $(\mu^* \times g)(n) =\mu^*(n)/n^s$ ($n\in \N$). We deduce by \eqref{coeff_tilde_a} that
\begin{equation*}
\widetilde{a}_{q_1,\ldots,q_k} = \frac1{Q^{s+k}} \sum_{\substack{m=1\\ (m,Q)=1}}^{\infty} \frac{\mu^*(mQ)}{m^{s+k}}
= \frac{\mu^*(Q)}{Q^{s+k}} \sum_{\substack{m=1\\ (m,Q)=1}}^{\infty} \frac{\mu^*(m)}{m^{s+k}}
\end{equation*}
\begin{equation*}
= \frac{\mu^*(Q)}{Q^{s+k}} \zeta(s+k) \prod_{p\in \P} \left(1-\frac{2}{p^{s+k}}\right) \prod_{p\mid Q} \left(1-\frac{1}{p^{s+k}}\right)  \left(1-\frac{2}{p^{s+k}}\right)^{-1},
\end{equation*}
leading to \eqref{mod_phi_s_star}.
\end{proof}

For $m\in \N$, $m\ge 2$ consider the function $g(n)=m^{\omega(n)}$, which is the unitary analogue of the Piltz divisor function $\tau_m(n)$.
Here $m^{\omega(n)}=\sum_{d\parallel n} (m-1)^{\omega(d)}$ for any $n\in \N$. We obtain by similar arguments:

\begin{cor} \label{Cor_unit_Piltz} For every $n_1,\ldots,n_k\in \N$ the following series is absolutely convergent:
\begin{equation} \label{Piltz_unit}
m^{\omega((n_1,\ldots,n_k)_{*k})} = \zeta(k) \prod_{p\in \P} \left( 1+\frac{m-2}{p^k}\right) \times
\end{equation}
\begin{equation*}
\times \sum_{q_1,\ldots,q_k=1}^{\infty} \frac{\phi_k(Q)(m-1)^{\omega(Q)} \widetilde{c}_{q_1}(n_1)\cdots
\widetilde{c}_{q_k}(n_k)}{Q^{2k} \prod_{p\mid Q} (1+(m-2)/p^k)} \quad (m,k\ge 2),
\end{equation*}
\end{cor}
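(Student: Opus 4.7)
The plan is to apply Theorem \ref{Th_f_g} directly with $g(n)=m^{\omega(n)}$. The key algebraic input is that $m^{\omega}=\1\times (m-1)^{\omega}$ under the unitary convolution (check on prime powers: $\sum_{d\parallel p^\nu}(m-1)^{\omega(d)}=1+(m-1)=m$). Convolving both sides with $\mu^*$ yields
\begin{equation*}
(\mu^* \times g)(n)=(m-1)^{\omega(n)}.
\end{equation*}
Since $(m-1)^{\omega(n)}\le (m-1)n$ and more precisely $2^{k\omega(n)}(m-1)^{\omega(n)}$ is multiplicative with Euler factor $1+2^k(m-1)/(p^k-1)$, the hypothesis of Theorem \ref{Th_f_g} becomes $\sum_n 2^{k\omega(n)}(m-1)^{\omega(n)}/n^k<\infty$, which holds precisely for $k\ge 2$, explaining the restriction in the statement.

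Next I would compute the coefficient \eqref{coeff_tilde_a} for this choice of $g$. Using $(m',Q)=1 \Rightarrow \omega(m'Q)=\omega(m')+\omega(Q)$, the coprimality condition factors out $(m-1)^{\omega(Q)}$ and leaves
\begin{equation*}
\widetilde{a}_{q_1,\ldots,q_k} = \frac{(m-1)^{\omega(Q)}}{Q^k}\sum_{\substack{m'=1\\(m',Q)=1}}^{\infty}\frac{(m-1)^{\omega(m')}}{(m')^k}.
\end{equation*}
The inner sum is a Dirichlet series of a multiplicative function, so it has Euler product $\prod_{p\nmid Q}\bigl(1+(m-1)/(p^k-1)\bigr)$. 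Rewriting each Euler factor as $(1+(m-2)/p^k)/(1-1/p^k)$ converts the full product into $\zeta(k)\prod_{p\in\P}(1+(m-2)/p^k)$, and the deleted local factors at primes $p\mid Q$ combine to give
\begin{equation*}
\prod_{p\mid Q}\frac{1-1/p^k}{1+(m-2)/p^k}=\frac{\phi_k(Q)/Q^k}{\prod_{p\mid Q}(1+(m-2)/p^k)}.
\end{equation*}

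Substituting these back and collecting the factor $Q^{-k}$ gives the coefficient claimed in \eqref{Piltz_unit}. The only real obstacle is bookkeeping: cleanly separating the global Euler product (which produces the $\zeta(k)\prod_p(1+(m-2)/p^k)$ constant in front of the sum) from the local correction factors at primes $p\mid Q$ (which, together with $\phi_k(Q)/Q^k$, produce the $Q$-dependent quotient inside the sum). Once this split is made, the identity reads off directly from Theorem \ref{Th_f_g}, and absolute convergence follows from the verified hypothesis.
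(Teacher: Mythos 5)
Your proposal is correct and follows essentially the same route the paper intends: apply Theorem \ref{Th_f_g} with $g(n)=m^{\omega(n)}$, use $\mu^*\times m^{\omega}=(m-1)^{\omega}$ (the identity the paper states just before the corollary), and evaluate the coefficient \eqref{coeff_tilde_a} as an Euler product split into the global factor $\zeta(k)\prod_p(1+(m-2)/p^k)$ and the local corrections at $p\mid Q$. Your verification of the convergence hypothesis, which pins down the restriction $k\ge 2$, is also the right (and necessary) check.
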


For $m=2$ identity \eqref{Piltz_unit} reduces to \eqref{tau_star}.

\begin{remark} \textup{It is possible to formulate the results of Theorem \ref{Th_f_gen_modified} in the case of multiplicative
functions $f$ of $k$ variables, and Theorem \ref{Th_f_g} in the case of multiplicative functions $g$ of one variable. Note
that if $g$ is multiplicative, then
$f(n_1,\ldots,n_k)= g((n_1,\ldots,n_k)_{*k})$ is multiplicative, viewed as a function of $k$ variables. See also Delange \cite{Del1976}
and the author \cite{Tot2017}. Furthermore, it is possible to apply the above results to other special (multiplicative)
functions. We do not go into more details.}
\end{remark}

\section{Acknowledgement} Supported by the European Union, co-financed by the European Social Fund
EFOP-3.6.1.-16-2016-00004.

\end{document}